\newtheorem{theorem}{Theorem}
\newtheorem{corollary}{Corollary}
\appto\normalsize{\belowdisplayshortskip=\belowdisplayskip}
\appto\normalsize{\abovedisplayshortskip=\abovedisplayskip}
\newcommand\myfootnote[1]{
\renewcommand{\thefootnote}{}
\footnotetext{#1}
\def\thefootnote{\@arabic\c@footnote}
}
\renewcommand{\subsection}{\@startsection{subsection}{2}{0mm}{-\baselineskip}{-5pt}{\it \bf}}
\title{$\sigma$-S\MakeLowercase{table} M\MakeLowercase{atrices}}
\author{M\MakeLowercase{ichael} A.S. T\MakeLowercase{horne}}
\date{}
\begin{document}

\address{B\MakeLowercase{ritish} A\MakeLowercase{ntarctic} S\MakeLowercase{urvey}, C\MakeLowercase{ambridge}}

\email{mior@bas.ac.uk}

\begin{abstract}
$\sigma$-Stable matrices are introduced 
and it is shown that the real roots of the polynomials comprising the coefficients of the characteristic polynomial indicate the coefficient sign changes. A proof of Obrechkoff is then used to show that the largest real root from the coefficients is the point of stability when the maximal eigenvalue of the $\sigma$-stable matrix is in $\mathbb{R}$. Some implications of the coefficient behaviour for a scaling relation are then discussed. 
\end{abstract}

\subjclass{93D15}
        
\maketitle

Let $\mathbf{M}$ be an $n \times n$ square matrix with real-valued entries, $m_{i,j} \in \mathbb{R}$ $\forall$ $i,j$. Then, multiply the diagonal entries $\mathbf{D}$ of $\mathbf{M}$ by a variable $\sigma \in \mathbb{C}$,\\

\begin{equation*}
\mathbf{M}_{\sigma}=\mathbf{M}-(1-\sigma)\cdot \mathbf{D} = 
\begin{pmatrix}
\sigma m_{1,1} & m_{1,2} &  \cdots & m_{1,n} \\
m_{2,1} & \sigma m_{2,2} & \cdots & m_{2,n} \\
m_{3,1} & m_{3,2} &  \cdots & m_{3,n} \\
\vdots  & \vdots  & \ddots  & \vdots  \\
m_{n,1} & m_{n,2} & \cdots & \sigma m_{n,n} 
\end{pmatrix}.
\label{eq:sigma}
\end{equation*}\\

If a $\sigma > 0$ can be found for $\mathbf{M}_{\sigma}$ such that the maximal (largest real-part) eigenvalue is $0$, $\operatorname{Re}(\lambda_{max(\mathbf{M}_{\sigma})}) = 0$, and any larger real-part of $\sigma$ results in $\operatorname{Re}(\lambda_{max(\mathbf{M}_{\sigma})}) < 0$, then $\mathbf{M}_{\sigma}$ is $\sigma$-stable. 

The characteristic polynomial, $P$, of $\mathbf{M}_{\sigma}$ is a monic polynomial in $x$, each of whose coefficients is a polynomial in $\sigma$:

\begin{equation*}
  P_{\mathbf{M}_{\sigma}} = \sum_{i=0}^n p_i x^i = \sum_{i=0}^n (\sum_{j=0}^{n-i} q_j\sigma^j) x^i
\end{equation*}

with $p_n=1$ and 

\begin{equation*}
p_i = \sum_{j=0}^{n-i} q_j\sigma^j
\end{equation*}\\

where the $q_j$ consist of terms of length $n-i$ from elements of $\mathbf{M}$. In each $p_i$, the $(n-i-1)^{st}$ term in the polynomial is $0$, with other terms also potentially absent (but never the $(n-i)^{th}$ term). If each $p_i$ has a leading coefficient that is positive, then $\mathbf{M}_{\sigma}$ can be made $\sigma$-stable through diagonal dominance [1]. This can occur for the highest order of $\sigma$ in each $p_i$ if each of the sums of the $\binom{n}{i}$ sets of the negative of the diagonal elements $\{\sum_{i=1}^n (-m_{i,i}),\sum_{i \neq l} (-m_{i,i})(-m_{l,l}),...\}$ are positive.

The polynomials $p_i$ provide an indication of the behaviour of each coefficient in the characteristic polynomial of $\mathbf{M}_{\sigma}$ depending on the value of $\sigma$ in relation to the individual real-valued roots of each $p_i$ ($\sigma_{i,1}...\sigma_{i,k}$ for some $k$ specific to each $p_i$). 

\begin{theorem}
The real-valued roots $({\sigma_{i,1},...,\sigma_{i,k}} \in \mathbb{R})$ of each $p_i$ indicate the point of respective coefficient sign change in the characteristic polynomial given the value of $\sigma$ of $\mathbf{M}_{\sigma}$. Furthermore, the largest real-valued root for each $p_i$ indicates when the coefficient becomes positive and it remains positive for any larger value of $\sigma$ (restricted to $\mathbb{R}$) applied to $\mathbf{M}_{\sigma}$.   
\label{theorem}
\end{theorem}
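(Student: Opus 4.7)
The plan is to reduce the claim to two elementary facts about real polynomials: continuity (so that sign changes happen only at zeros, by the Intermediate Value Theorem) and the behaviour at $+\infty$ determined by the leading coefficient.

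First I would fix $i \in \{0,1,\dots,n\}$ and observe that $p_i(\sigma)=\sum_{j=0}^{n-i} q_j \sigma^j$ is a polynomial in $\sigma$ with real coefficients, because the $q_j$ are fixed real-valued sums of products of the entries $m_{k,l}\in\mathbb{R}$. As a polynomial map $\mathbb{R}\to\mathbb{R}$ it is continuous. By the Intermediate Value Theorem, if $p_i(\sigma_1)$ and $p_i(\sigma_2)$ have opposite signs then $p_i$ must vanish somewhere between $\sigma_1$ and $\sigma_2$; equivalently, on any interval of $\mathbb{R}\setminus\{\sigma_{i,1},\dots,\sigma_{i,k}\}$ the sign of $p_i$ is constant. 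This gives the first assertion of the theorem: every sign change of the coefficient $p_i$, viewed as a function of the real parameter $\sigma$, occurs at one of its real roots $\sigma_{i,1},\dots,\sigma_{i,k}$.

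Next I would use the hypothesis, already established in the excerpt, that the leading coefficient of each $p_i$ in $\sigma$ is positive (the scenario under which $\mathbf{M}_\sigma$ is made $\sigma$-stable via diagonal dominance). Since $p_i$ is a polynomial of degree $n-i$ with positive leading coefficient, $\lim_{\sigma\to+\infty} p_i(\sigma)=+\infty$, so there exists $\sigma^*$ with $p_i(\sigma)>0$ for all $\sigma\geq\sigma^*$. Let $\sigma_{i,\max}:=\max\{\sigma_{i,1},\dots,\sigma_{i,k}\}$. On the open half-line $(\sigma_{i,\max},\infty)$ the polynomial $p_i$ has no real zero, so by the previous paragraph its sign is constant, and that constant sign must agree with its sign on $[\sigma^*,\infty)$, namely positive. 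Hence $p_i(\sigma)>0$ for every real $\sigma>\sigma_{i,\max}$, proving the second assertion.

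To close out the first assertion cleanly I would add a short remark on the boundary behaviour at the roots themselves: at a real root of odd multiplicity $p_i$ strictly changes sign, while at a root of even multiplicity $p_i$ touches zero but returns to the same sign, a distinction that is consistent with the theorem's wording that the roots are the \emph{points of potential sign change}. The main obstacle I anticipate is simply this bookkeeping around multiplicity; the analytical content of the theorem is the combination of continuity of $p_i$ and the sign of its leading coefficient, and nothing deeper about the matrix structure is required once those two properties are in hand.
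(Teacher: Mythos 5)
Your proof is correct, but it follows a genuinely different route from the paper's. The paper argues through stability theory: it invokes the necessary condition that a stable polynomial has all positive coefficients, appeals to diagonal dominance (Gershgorin) to guarantee that for sufficiently large $\sigma$ the matrix is stable and hence all $p_i$ eventually become positive, and cites Descartes' rule of signs to connect the real roots of each $p_i$ to its sign changes. You instead work directly with the $p_i$ as real polynomials in $\sigma$: the Intermediate Value Theorem shows that a sign change can occur only at a real root, and the positivity of the leading coefficient gives $p_i(\sigma)\to+\infty$, so on the half-line past the largest real root the sign is constant and must be positive. Your version is more elementary and, in one respect, tighter: the paper's step from ``positive for sufficiently large $\sigma$'' to ``positive for every $\sigma$ exceeding the largest real root'' is exactly the IVT argument you make explicit, and Descartes' rule of signs (which bounds the number of positive roots by coefficient sign changes) is not really the tool doing that work. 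You also correctly flag the even-multiplicity caveat --- at such a root $p_i$ touches zero without changing sign --- which the paper's statement and proof pass over. What the paper's framing buys in exchange is the link to the stability interpretation (why one should care that the coefficients become positive at all), which your argument deliberately strips away; you do, however, inherit from the paper the standing hypothesis that each $p_i$ has positive leading coefficient, and it is worth stating explicitly that the second assertion of the theorem is conditional on that hypothesis.
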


\begin{proof}
A necessary condition for stability is that all the coefficients of a characteristic polynomial are positive (a point that can be deduced for  $\sigma$-stable matrices through the sign of the trace and Descartes' rule of signs [2]). Further, any $\sigma$-stable matrix can be made stable through diagonal dominance. This implies that eventually, given the appropriate $\sigma$, all the coefficients will become positive. Each coefficient of the characteristic polynomial in $x$, which is in the field of $\mathbb{R}$, is itself a polynomial, $p_i$, in $\sigma$ and, again by Descartes' rule of signs, the real roots reflect the changing sign of the $p_i$ and therefore of the coefficients of the characteristic polynomial in $x$. Diagonal dominance ensures that any $\sigma$ larger than the largest real root of each $p_i$ will produce a positive coefficient. Any and all smaller real roots of $p_i$ indicate the points at which, for the appropriate $\sigma$, the sign of the coefficient alternates. 
\end{proof}

While a necessary condition for stability is that all the coefficients of a characteristic polynomial are positive, it is not sufficient. This point, along with Theorem~\ref{theorem}, leads to the following theorem and corollary. For these, we define the set 
$\Omega=\{\{\sigma_{0,1},...,\sigma_{0,k_0}\},...,\{\sigma_{i,1},...,\sigma_{i,k_i}\},...,\\\{\sigma_{n-1,1},...,\sigma_{n-1,k_{n-1}}\}\}$ consisting of all the real-valued roots ($\Omega \in \mathbb{R}$) of all the $p_i$ polynomials from the characteristic polynomial of a given $\mathbf{M}_{\sigma}$.

\begin{theorem}
If, for a given $\sigma$, the leading eigenvalue, $\lambda_{max(\mathbf{M}_{\sigma})}$, of $\mathbf{M}_{\sigma}$ is $0$, and $\lambda_{max(\mathbf{M}_{\sigma})} \in \mathbb{R}$ ($\operatorname{Re}(\lambda_{max(\mathbf{M}_{\sigma})}) = 0$, $\operatorname{Im}(\lambda_{max(\mathbf{M}_{\sigma})}) = 0$), then $max(\Omega) = \sigma$.
\label{lemreal}
\end{theorem}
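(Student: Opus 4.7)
The plan is to prove $\max(\Omega) = \sigma$ by a two-sided argument: first that $\sigma \in \Omega$, and then that no element of $\Omega$ exceeds $\sigma$. The lower bound is the easy half. Because $\lambda_{\max(\mathbf{M}_{\sigma})} = 0$ with both real and imaginary parts vanishing, $0$ is genuinely an eigenvalue of $\mathbf{M}_{\sigma}$, so $P_{\mathbf{M}_{\sigma}}(0) = 0$. Reading off the constant term of the characteristic polynomial gives $p_0(\sigma) = 0$, so $\sigma$ is a real root of $p_0$ and therefore lies in $\Omega$, establishing $\max(\Omega) \geq \sigma$.

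For the upper bound I would appeal to the $\sigma$-stability definition from the introduction: every real $\sigma' > \sigma$ satisfies $\operatorname{Re}(\lambda_{\max(\mathbf{M}_{\sigma'})}) < 0$, so every eigenvalue of $\mathbf{M}_{\sigma'}$ has strictly negative real part and $\mathbf{M}_{\sigma'}$ is Hurwitz stable. The necessary-condition argument already used in the proof of Theorem~\ref{theorem} then forces every coefficient of the characteristic polynomial to be strictly positive, so $p_i(\sigma') > 0$ for all $i$ and all real $\sigma' > \sigma$. Combined with Theorem~\ref{theorem}, which characterises the largest real root of each $p_i$ as the exact threshold above which that coefficient becomes and remains positive, a hypothetical $\sigma^{\star} \in \Omega$ with $\sigma^{\star} > \sigma$ would force the corresponding $p_i$ to be non-positive immediately to the left of $\sigma^{\star}$, contradicting the positivity just established. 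Hence $\max(\Omega) \leq \sigma$, and the two inequalities close the argument.

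The main obstacle will be making precise the step that converts \emph{``the leading eigenvalue is real and equal to zero at $\sigma$''} into strict positivity of every $p_i$ on the whole open half-line $(\sigma,\infty)$. The $\sigma$-stability definition gives this almost directly for the leading eigenvalue's real part, but I still need to exclude the possibility that some $p_i$ vanishes (even tangentially) at a second real point beyond $\sigma$ without any eigenvalue crossing back across the imaginary axis. This is exactly where the proof of Obrechkoff flagged in the abstract should enter, pinning down uniformly in $i$ the behaviour of each $p_i$ to the right of its rightmost real root. Once that uniform statement is in hand, the strict-positivity half of the argument fits together with Theorem~\ref{theorem} and the identification $p_0(\sigma)=0$ to deliver $\max(\Omega)=\sigma$.
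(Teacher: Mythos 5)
Your proof is correct, but it takes a genuinely different route from the paper's. The paper obtains $\sigma \geq \max(\Omega)$ from the necessary condition that a (marginally) stable polynomial has positive coefficients, and obtains $\sigma \leq \max(\Omega)$ by invoking Obrechkoff: a polynomial with all positive coefficients cannot have a real root in the right half-plane, so the real root at $0$ forces some coefficient to be non-positive, whence $\sigma$ cannot exceed the largest root of every $p_i$. You swap the machinery between the two directions. Your lower bound replaces Obrechkoff with the elementary observation that $\lambda=0$ being an eigenvalue gives $p_0(\sigma)=P_{\mathbf{M}_\sigma}(0)=0$; this is cleaner, proves the strictly stronger fact that $\sigma\in\Omega$ (it is a root of $p_0$), and dovetails with the paper's own closing remark identifying $p_0$ with the characteristic polynomial of $\bar{\mathbf{M}}_0$. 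Your upper bound uses strict Hurwitz stability of $\mathbf{M}_{\sigma'}$ for every real $\sigma'>\sigma$ (which is exactly the $\sigma$-stability definition) rather than marginal stability at $\sigma$ itself; this sidesteps the paper's slightly awkward claim that all coefficients are positive at $\sigma$, which cannot be literally true since $p_0(\sigma)=0$ there. One correction to your own assessment: the ``main obstacle'' you flag in the last paragraph is not an obstacle. The characteristic polynomial of a real Hurwitz matrix factors into linear and quadratic real factors with strictly positive coefficients, so every coefficient is strictly positive; hence $p_i(\sigma')>0$ for all $i$ and all $\sigma'>\sigma$, and any hypothetical root $\sigma^{\star}\in\Omega$ with $\sigma^{\star}>\sigma$ is an immediate contradiction because $p_i(\sigma^{\star})=0$ --- a tangential zero is still a zero. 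You need neither Obrechkoff nor the threshold characterisation of Theorem~\ref{theorem} to close that half of the argument.
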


\begin{proof}
By the necessary condition of stability, all coefficients must be positive, which means that $\sigma \ge max(\Omega)$. 
By the proof of Obrechkoff [3], if a polynomial has positive coefficients and has a root in the right-half plane, then the root cannot lie on the real axis. This implies that if $\sigma \in \mathbb{R}$ then $\sigma \leq max(\Omega)$. Therefore, $\sigma = max(\Omega)$.
\end{proof}

\begin{corollary}
If, for a given $\sigma$, the real-part of the leading eigenvalue, $\lambda_{max(\mathbf{M}_{\sigma})}$, of $\mathbf{M}_{\sigma}$ is $0$ but with a non-zero imaginary part ($\operatorname{Re}(\lambda_{max(\mathbf{M}_{\sigma})}) = 0$, $\operatorname{Im}(\lambda_{max(\mathbf{M}_{\sigma})}) \neq 0$), then $max(\Omega) \leq \sigma$.
\end{corollary}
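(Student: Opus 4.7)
The plan is to mirror the proof of Theorem~\ref{lemreal} and observe that only the first of its two inequalities survives once the maximal eigenvalue is allowed off the real axis.

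First I would use the $\sigma$-stability definition to establish that every coefficient of the characteristic polynomial is nonnegative at the boundary value $\sigma$. Concretely, for any real $\sigma' > \sigma$ the definition guarantees $\operatorname{Re}(\lambda_{max(\mathbf{M}_{\sigma'})}) < 0$, so $\mathbf{M}_{\sigma'}$ is stable; the necessary condition of stability then forces $p_i(\sigma') > 0$ for every $i$. Since each $p_i$ is a polynomial in $\sigma$, hence continuous, letting $\sigma' \downarrow \sigma$ yields $p_i(\sigma) \geq 0$ for every $i$.

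Next I would invoke Theorem~\ref{theorem}: the largest real root $\sigma_{i,k_i}$ of $p_i$ is precisely the threshold past which the coefficient becomes positive and stays positive, and just below that threshold it is negative (otherwise $\sigma_{i,k_i}$ would not be a sign change, or would not be the largest real root). Hence $p_i(\sigma) \geq 0$ forces $\sigma \geq \sigma_{i,k_i}$ for every $i$, which is exactly $\sigma \geq \max(\Omega)$, as claimed.

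Finally I would indicate why this inequality cannot be sharpened to an equality as in Theorem~\ref{lemreal}. The converse direction there relied on Obrechkoff: a polynomial with all positive coefficients admits no nonnegative real root, which rules out $\sigma > \max(\Omega)$. In the present setting the offending eigenvalue lies on the imaginary axis rather than the real axis, and Obrechkoff places no restriction on purely imaginary roots of a positive-coefficient polynomial. The main obstacle I expect, therefore, is not in the derivation itself but in convincing the reader that this gap is genuine and unavoidable: no upper bound on $\sigma$ can be extracted, so the corollary must be stated as a one-sided inequality and cannot be strengthened further.
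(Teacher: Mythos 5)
Your argument is correct and follows essentially the same route as the paper: the necessary condition for stability forces the coefficients $p_i$ to be (non)negative at the given $\sigma$, and by the sign-change characterization of the largest real root of each $p_i$ this yields $\max(\Omega) \leq \sigma$. Your limiting step ($\sigma' \downarrow \sigma$ with continuity of the $p_i$) and the closing remark on why Obrechkoff's result cannot supply the reverse inequality are refinements the paper leaves implicit, but they do not change the approach.
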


\begin{proof}
By the necessary condition of stability, all coefficients of the characteristic polynomial must be positive. Therefore $max(\Omega) \leq \sigma$.
\end{proof}

A scaling operation, first introduced in [4], related the $\sigma$ of $\mathbf{M}_{\sigma}$ with the maximal eigenvalue of a matrix, $\bar{\mathbf{M}}_0$, defined by

\begin{equation*}
\bar{\mathbf{M}}_0 = -\mathbf{D}^{-1} \mathbf{M}+\mathbf{I}.
\end{equation*}

The characteristic polynomial of $\bar{\mathbf{M}}_0$ in $x$ is the same polynomial as that for the last coefficient $p_0$ in $\sigma$ of the characteristic polynomial of $\mathbf{M}_{\sigma}$ ($P_{\bar{\mathbf{M}}_0} = p_0$, $p_0 = (-1)^n$ det$(\mathbf{M}_{\sigma})$). Therefore when $\sigma=\lambda_{max(\bar{\mathbf{M}}_0)}$, $p_0$ vanishes, forcing $\mathbf{M}_{\sigma}$ to have an eigenvalue of $0$. \\

\centerline {\bf References}
\medskip

\noindent [1] S. Gerschgorin, \"{U}ber die Abgrenzung der Eigenwerte einer Matrix. Izv. Akad. Nauk. USSR Otd. Fiz.-Mat, 6: 749-754, 1931.
\medskip

\noindent [2] R. Descartes, La G\'{e}om\'{e}trie. Paris, 1637. 
\medskip

\noindent [3] N. Obrechkoff, Sur un probl\`{e}me de Laguerre. C.R. Acd. Sci. Paris, 177: 223-235, 1923.
\medskip

\noindent [4] A-M. Neutel, M. Thorne, Interaction strengths in balanced carbon cycles and the absence of a relation between ecosystem complexity and stability. Ecology Letters, 17 (6): 651-661, 2014.

\end{document}